\theoremstyle{plain}
\newtheorem{thm}{Theorem}[section]
\newtheorem{prop}[thm]{Proposition}
\theoremstyle{definition}
\newtheorem{rmk}[thm]{Remark}
\def\O{\mathcal{O}}
\def\rk{\mathrm{rk}}
\begin{document}

\title{Excess dimension for secant loci in symmetric products of curves}
\author{Marian Aprodu}
\address{Faculty of Mathematics and Computer Science, University of Bucharest, 14 Academiei Street, 010014 Bucharest, Romania}
\address{Simion Stoilow Institute of Mathematics of the Romanian Academy, P.O. Box 1-764, 014700 Bucharest, Romania}
\author{Edoardo Sernesi}
\email{marian.aprodu@imar.ro, marian.aprodu@fmi.unibuc.ro}
\address{Dipartimento di Matematica e Fisica, Universit\`a degli Studi Roma Tre, Largo San Leonardo Murialdo, I-00146 Roma, Italy}
\email{sernesi@mat.uniroma3.it}
\thanks{We are grateful to the two referees for having made pertinent remarks on the first version of the manuscript. We thank the Max Planck Institute Math. Bonn for hospitality during the preparation of this work. Marian Aprodu was partly supported by the CNCS-UEFISCDI grant PN-II-PCE-2011-3-0288. }
\subjclass[2010]{14H51, 14M12}

\maketitle

\begin{abstract}
We extend a result of W. Fulton, J. Harris and R. Lazarsfeld \cite{Fulton-Harris-Lazarsfeld} to secant loci in symmetric products of curves. We compare three secant loci and prove the the dimensions of bigger loci can not be excessively larger than the dimension of smaller loci.
\end{abstract}

\section{Introduction}

In this Note we study the following problem that is connected to the geometry of secant loci and has already appeared in \cite{Fulton-Harris-Lazarsfeld}. Let $m$, $n$ and $k\le \mathrm{min}\{m,n\}$ be three positive integers, $X$ be an integral algebraic variety over $\mathbb C$, and consider a diagram of vector bundles:
\[
\xymatrix{
0 \ar[r] & H \ar[r] & F' \ar[r]^-\pi & F \ar[r] & 0 \\
&&E \ar[u]^-\sigma\ar[ur]_-{\pi\sigma}}
\]
where $\rk(E)=m$, \ $\rk(F')=n+1$ and $\rk(F)=n$. We propose to compare the dimensions of closed subschemes of $X$ defined as
\[
D_k(\sigma)=\{x\in X|\mathrm{rank}(\sigma_x)\le k\},
\]
\[
D_k(\pi\sigma)=\{x\in X|\mathrm{rank}(\pi_x\circ\sigma_x)\le k\}
\]
and 
\[
D_{k+1}(\sigma)=\{x\in X|\mathrm{rank}(\sigma_x)\le k+1\}.
\]

The comparison of $D_k(\sigma)$ and $D_k(\pi\sigma)$ supposing that $E^*\otimes H$ is ample is the content of  \cite[Lemma 4]{Fulton-Harris-Lazarsfeld}, however, we try to avoid here the ampleness hypothesis and replace it by weaker assumptions.

Obviously, $D_k(\sigma)\subset D_k(\pi\sigma)\subset D_{k+1}(\sigma)$ and in general the inclusions are strict. However, we shall prove that the dimension of bigger loci cannot increase too much compared to the dimension of smaller loci, similarly to \cite[Lemma 4]{Fulton-Harris-Lazarsfeld}.
To this end, we place ourselves first in the generic situation, section \ref{sec:generic}. This is one of the usual tricks used in the study of determinantal subschemes, see for example \cite{Arbarello-Cornalba-Griffiths-Harris}. In section \ref{sec:comparison} we find explicit comparison bounds, and, in the last part, we apply these bounds to the case of secant loci inside symmetric products of curves, which are defined as degeneracy loci of suitable sheaf morphisms. For canonical line bundles, the result specialises to the well-known "excess linear series" Theorem of Fulton-Harris-Lazarsfeld, \cite{Fulton-Harris-Lazarsfeld}. 

%\emph{Acknowledgements.} We are grateful to the two referees for having made pertinent remarks on the first version of the manuscript.

\section{The generic situation}
\label{sec:generic}

For any positive integers $a$, $b$ and $k\le \mathrm{min}\{a,b\}$ denote by $M(a,b)$ the variety of complex $a\times b$ matrices and by $M_k(a,b)\subset M(a,b)$ the subscheme of matrices of rank $\le k$. Its singular locus is precisely $M_{k-1}(a,b)$.  
%and there is a nice desingularisation \cite[p. 67--69]{Arbarello-Cornalba-Griffiths-Harris}
%\[
%\widetilde{M}_k(a,b):=\{(A,W)\in M(a,b)\times \mathrm{Gr}(b-k,b)| A\cdot W=0\}.
%\]

\medskip

Given two integers $m$ and $n$ consider the morphism
\[
\mu:M(m,n+1)\times M(n+1,n)\to M(m,n)
\]
given by matrix multiplication.

\medskip

We compare the intersection of the three closed  subsets 
\[
\mu^{-1}(M_k(m,n)),\
M_k(m,n+1)\times M(n+1,n)
\mbox{ and }M_{k+1}(m,n+1)\times M(n+1,n)
\] 
of $M(m,n+1)\times M(n+1,n)$ with the complement of $M(m,n+1)\times M_{n-1}(n+1,n)$.

\medskip

Clearly  
$(M_k(m,n+1)\times M(n+1,n))\setminus (M(m,n+1)\times M_{n-1}(n+1,n))$ is contained in $\mu^{-1}(M_k(m,n))\setminus (M(m,n+1)\times M_{n-1}(n+1,n))$
and
$\mu^{-1}(M_k(m,n))\setminus (M(m,n+1)\times M_{n-1}(n+1,n))$ is contained in $(M_{k+1}(m,n+1)\times M(n+1,n))\setminus (M(m,n+1)\times M_{n-1}(n+1,n))$.

\medskip

We compute:
\begin{align}
\dim (M_k(m,n+1)\times M(n+1,n)) &= m(n+1)-(m-k)(n+1-k)+n(n+1) \notag \\
&= (m+n)(n+1)-(m-k)(n+1-k)\notag
\end{align}
\begin{align}
\dim (M_{k+1}(m,n+1)\times M(n+1,n)) &= m(n+1)-(m-k-1)(n-k)+n(n+1) \notag \\
&= (m+n)(n+1)-(m-k-1)(n-k)\notag
\end{align}
and
\begin{align}
\dim(\mu^{-1}(M_k(m,n))) &\ge m(n+1)+n(n+1)-(m-k)(n-k) \notag \\
&= (m+n)(n+1)-(m-k)(n-k)\notag,
\end{align}
by  the subadditivity of codimension for $\mu$ (see, for example, \cite[Theorem 17.24]{Harris}).

Moreover, we prove:

\begin{prop}
\label{prop:dimgen}
$
\dim\left(\mu^{-1}(M_k(m,n))\setminus (M(m,n+1)\times M_{n-1}(n+1,n))\right) = (m+n)(n+1)-(m-k)(n-k).
$
\end{prop}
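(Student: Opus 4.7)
\medskip

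\textbf{Proof plan.}
The plan is to compute $\dim Y$ directly (both bounds) via stratification of its first projection, where $Y := \mu^{-1}(M_k(m,n))\setminus (M(m,n+1)\times M_{n-1}(n+1,n))$. Project $Y$ onto the first factor $M(m,n+1)$ and stratify the image by $r=\mathrm{rank}(A)$. The key observation is that on $Y$ the matrix $B$ has full column rank, so $\mathrm{im}(B)\subset\mathbb{C}^{n+1}$ is $n$-dimensional and
\[
\mathrm{rank}(AB) \;=\; n-\dim\bigl(\ker A\cap\mathrm{im}(B)\bigr).
\]
Since $\dim\ker A=n+1-r$, the inequality $\mathrm{rank}(AB)\le k$ forces $r\le k+1$, so only the strata with $r\in\{0,1,\ldots,k+1\}$ contribute.

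For $r\le k$ the rank condition on $AB$ is automatic and the stratum is an open subset of $M_r(m,n+1)\times M(n+1,n)$, of dimension
\[
\bigl(m(n+1)-(m-r)(n+1-r)\bigr)+n(n+1) \;=\; (m+n)(n+1)-(m-r)(n+1-r).
\]
The function $g(r)=(m-r)(n+1-r)$ is strictly decreasing on $0\le r\le k$ (its derivative $-(m+n+1-2r)$ is at most $-1$ when $r\le k\le\min(m,n)$), so $g(r)\ge g(k)=(m-k)(n-k)+(m-k)\ge(m-k)(n-k)$ and each such stratum has dimension at most the target.

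The critical case is $r=k+1$ (which is nonempty when $k<m$). There $\dim\ker A=n-k$ and the condition $\mathrm{rank}(AB)\le k$ becomes $\ker A\subset\mathrm{im}(B)$. This stratum fibers over the rank-$(k+1)$ locus of $M(m,n+1)$, of dimension $m(n+1)-(m-k-1)(n-k)$. Over a fixed $A$ the fiber is parametrized by the Grassmannian $G(k,\mathbb{C}^{n+1}/\ker A)\cong G(k,k+1)$ of $n$-planes $V\supset\ker A$ (dimension $k$), together with a surjection $\mathbb{C}^n\twoheadrightarrow V$ (an open subset of $\mathrm{Hom}(\mathbb{C}^n,V)\cong\mathbb{C}^{n^2}$). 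Summing yields
\[
m(n+1)-(m-k-1)(n-k)+k+n^2 \;=\; (m+n)(n+1)-(m-k)(n-k),
\]
which is precisely the target, and gives both the upper bound (no stratum exceeds it) and the lower bound (this stratum achieves it).

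The main technical obstacle I anticipate is verifying that the $r=k+1$ stratum is actually irreducible of the claimed dimension rather than merely a set-theoretic union of fibers; the natural remedy is to realise it as the image of an irreducible incidence variety inside $M(m,n+1)\times G(k,k+1)\times M(n+1,n)$ parametrising triples $(A,V,B)$ with $\ker A\subset V=\mathrm{im}(B)$, which is a smooth fibration over the rank-$(k+1)$ locus of $M(m,n+1)$. Once that bookkeeping is in place, the two dimension counts above close the proof.
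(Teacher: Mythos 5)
Your proposal is correct, but it proves the proposition by a genuinely different route than the paper. The paper's proof is infinitesimal: it writes down the Jacobian matrix of the multiplication map $\mu$ in coordinates, observes that it has maximal rank $mn$ wherever $B$ has rank $n$, and concludes that $\mu$ is smooth and surjective off $M(m,n+1)\times M_{n-1}(n+1,n)$, so that $\mu^{-1}$ preserves the codimension $(m-k)(n-k)$ of $M_k(m,n)$. You instead stratify the locus by $r=\mathrm{rank}(A)$, use the linear-algebra identity $\mathrm{rank}(AB)=n-\dim(\ker A\cap\mathrm{im}\,B)$ to cut the strata down to $r\le k+1$, and compute each stratum's dimension by fibering over the rank-$r$ locus; your arithmetic checks out, the strata with $r\le k$ stay below the target, and the $r=k+1$ stratum (realised via the incidence variety you describe, on which the projection forgetting $V=\mathrm{im}(B)$ is injective) attains it exactly. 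Two small remarks: in the degenerate case $k=m$ the critical stratum $r=k+1$ is empty, but then the $r=k$ stratum already has the target dimension $(m+n)(n+1)$, so the lower bound survives; and note that you and the paper use opposite row/column conventions for which factor acts first, which is harmless here. What each approach buys: yours is more elementary (no Jacobian computation, no appeal to equidimensionality of fibres of smooth maps) and pinpoints the dominant stratum, which is extra geometric information; the paper's approach is shorter and, more importantly, its by-product --- the smoothness of $\mu$ on the complement of $M(m,n+1)\times M_{n-1}(n+1,n)$ --- is reused in the proof of Proposition \ref{prop:dim} to establish inequality (\ref{E:equal4}), so if you adopted your proof you would still need to extract that smoothness statement separately.
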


\begin{proof}
 Let $A=(a_{ij})\in M(m,n+1)$ and $B=(b_{jk})\in M(n+1,n)$.  By definition $\mu(A,B)=AB$. Denote by $B^t$ the transpose of $B$. Working with coordinate order
 \[
(c_{11},c_{12},\ldots,c_{1n}|c_{21},c_{22},\ldots,c_{2n}|\ldots|c_{m1},c_{m2},\ldots,c_{mn})
\]
on $M(m,n)$ and
{\small
\[
(a_{11},a_{12},\ldots,a_{1,n+1}|a_{21},a_{22}\ldots,a_{2,n+1}|\ldots|a_{m1},a_{m2},\ldots,a_{m,n+1}|b_{11},\ldots|\ldots|\ldots b_{n+1,n})
\]}
on $M(m,n+1)\times M(n+1,n)$ the jacobian matrix of $\mu$ at $(A,B)$ is composed of two blocks:
 
% \[
%\left(\begin{array}{cccc|cccc}
% B^t & 0 & \dots & 0  & A & 0 & \dots & 0 \\
% 0 & B^t & \dots & 0  & 0 & A & \dots & 0\\
% \vdots & \vdots&& \vdots&  \vdots &\vdots&& \vdots\\
% 0 & 0 & \dots & B^t  & 0 & 0 & \dots & A\\
% \end{array}\right)
% \]
\[
\left(\begin{array}{cccc|c}
 B^t & 0 & \dots & 0  & \dots  \\
 0 & B^t & \dots & 0  & \dots \\
 \vdots & \vdots&& \vdots&  \\
 0 & 0 & \dots & B^t  & \dots \\
 \end{array}\right)
 \]
 where the first block corresponding to $(\partial \mu/\partial a_{ij})$ has $m$ copies of $B^t$, and the second block is equivalent, via row permutations, with a similar matrix containing $n$ copies of $A$.
 %\medskip
 
%Above, we displayed in the Jacobian matrix the relevant part, the partial derivatives with respect to the variables $a_{ij}$. 
This matrix has  maximal rank $mn$ if $B$ has maximal rank $n$. Then $\mu$ is surjective  and smooth outside $M(m,n+1)\times M_{n-1}(n+1,n)$ and hence the fibres are equidimensional and $\mu^{-1}$ preserves codimension on the complement of this locus. 
  \end{proof}
  
%\begin{rmk}  
%$\mu^{-1}(0)$  has codimension $mn$ and is defined by $mn$ equations (it is a  Buchsbaum-Eisenbud variety of complexes, see \cite{Kempf}). Hence it is a complete intersection subscheme, in particular it is  Cohen-Macaulay. Moreover, from the generic perfection of the generic varieties $M_k(m,n)$ and from \ref{prop:dim}  it follows  that  $\mu^{-1}(M_k(m,n))$ is Cohen-Macaulay, too (see e.g. \cite{BV88}, Theorem  3.5). 
%\end{rmk}

Therefore we have:
{
\begin{eqnarray}\label{E:equal1}
&&\dim\left(\mu^{-1}(M_k(m,n))\setminus (M(m,n+1)\times M_{n-1}(n+1,n))\right) \\
\nonumber 
&& - \dim \left((M_k(m,n+1)\times M(n+1,n))\setminus (M(m,n+1)\times M_{n-1}(n+1,n))\right) \\
\nonumber
&& = m-k 
\end{eqnarray}
}
and
\begin{eqnarray}\label{E:equal2}
&&\dim \left((M_{k+1}(m,n+1)\times M(n+1,n))\setminus (M(m,n+1)\times M_{n-1}(n+1,n))\right)\\
\nonumber 
&&-\dim\left(\mu^{-1}(M_k(m,n))\setminus (M(m,n+1)\times M_{n-1}(n+1,n))\right) = n-k.
\end{eqnarray}

%\textbf{Claim.} $\mu^{-1}(M_k(m,n))$ is regularly embedded in $M_k(m,n+1)\times M(n+1,n)$ i.e. is locally complete intersection (to be proved later).

%%%%%%%%%%%%%%%%%%%%%%%%%%%%%%%%%%%%%%%%%%%%%%%%%%%%%%%%%%%%%%

\section{The comparison of degeneracy loci} 
\label{sec:comparison}

Let $X$ be an arbitrary integral algebraic variety, and consider a diagram of vector bundles as at the beginning:
\[
\xymatrix{
0 \ar[r] & H \ar[r] & F' \ar[r]^-\pi & F \ar[r] & 0 \\
&&E \ar[u]^-\sigma\ar[ur]_-{\pi\sigma}}
\]
with $\rk(E)=m$, \ $\rk(F')=n+1$ and $\rk(F)=n$. The problem of comparing dimensions of degeneration loci is local, and hence we may assume that the three vector bundles are trivial and the morphisms are given by matrices. With the convention that a matrix defines a morphism by multiplication on the left with row vectors, this diagram induces a natural morphism:
\[
f: X \longrightarrow M(m,n+1) \times M(n+1,n),\ f=(f_1,f_2).
\]

Note that, by definition, the image of the second component $f_2$ is contained in $M(n+1,n)\setminus M_{n-1}(n+1,n)$.

We have the identifications:
\[
D_k(\sigma) = f^{-1}\left(M_k(m,n+1)\times M(n+1,n)\right),
\]
\[
D_{k+1}(\sigma) = f^{-1}\left(M_{k+1}(m,n+1)\times M(n+1,n)\right)
\]
and
\[
D_k(\pi\sigma) = f^{-1}(\mu^{-1}(M_k(m,n))).
\]

We prove:

\begin{prop}
\label{prop:dim}
%Assume that no irreducible component of $D_k(\sigma)$ is contained in $D_{k-1}(\sigma)$. Then:
Assume that
no irreducible component of $D_k(\pi\sigma)$ is contained in $D_k(\sigma)$.
If $D_k(\pi\sigma)$ is non empty, % and no irreducible component of $D_k(\pi\sigma)$ is contained in $D_k(\sigma)$, then
%(implying that $D_{k+1}(\sigma)$ is non empty), 
then
\begin{equation}
\label{E:equal3}
\dim(D_k(\pi\sigma))\ge \dim(D_{k+1}(\sigma))  - (n-k).
\end{equation}
If $D_k(\sigma)$ is non empty, %(which implies that $D_k(\pi\sigma)$ is also non empty), 
then
\begin{equation}
\label{E:equal4}
\dim(D_k(\sigma)) \ge \dim(D_k(\pi\sigma)) - (m-k)
\end{equation}
and
%If $D_k(\sigma)$ is non empty, then
\begin{equation}
\label{E:equal5}
\dim(D_k(\sigma)) \ge \dim(D_{k+1}(\sigma))  - (m+n-2k).
\end{equation}
\end{prop}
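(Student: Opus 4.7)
The plan is to reduce the three inequalities to local codimension bounds on $X$, obtained by pulling back the generic dimension statements of Section~\ref{sec:generic} via $f$ and invoking Krull's Hauptidealsatz.

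The first observation is that since $f_2$ takes values in $M(n+1,n)\setminus M_{n-1}(n+1,n)$, the morphism $f$ factors through the open subset $U := M(m,n+1) \times (M(n+1,n)\setminus M_{n-1}(n+1,n))$. Setting $Y_1 := M_k(m,n+1) \times (M(n+1,n)\setminus M_{n-1}(n+1,n))$, $Y_2 := \mu^{-1}(M_k(m,n)) \cap U$ and $Y_3 := M_{k+1}(m,n+1) \times (M(n+1,n)\setminus M_{n-1}(n+1,n))$, one has $Y_1 \subset Y_2 \subset Y_3 \subset U$, with codimensions $(m-k)(n+1-k)$, $(m-k)(n-k)$ and $(m-k-1)(n-k)$ by Proposition~\ref{prop:dimgen}; the codimension differences are $m-k$ and $n-k$. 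Moreover $D_k(\sigma) = f^{-1}(Y_1)$, $D_k(\pi\sigma) = f^{-1}(Y_2)$ and $D_{k+1}(\sigma) = f^{-1}(Y_3)$.

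For (\ref{E:equal3}) I would use the following sheaf-theoretic description. On $X\setminus D_k(\sigma)$ the image $\sigma(E)$ is a rank-$(k+1)$ sub-bundle of $F'$, and the composition $H \hookrightarrow F' \twoheadrightarrow F'/\sigma(E)$ is a section of the rank-$(n-k)$ bundle $H^{-1}\otimes F'/\sigma(E)$ whose zero scheme equals $D_k(\pi\sigma)\setminus D_k(\sigma)$. By Krull's Hauptidealsatz every non-empty proper component of this zero locus has codimension at most $n-k$ inside the ambient component of $D_{k+1}(\sigma)\setminus D_k(\sigma)$. The hypothesis that no component of $D_k(\pi\sigma)$ is contained in $D_k(\sigma)$ ensures that every component $W$ of $D_k(\pi\sigma)$ has its generic point in this open locus; hence $W$ is contained in a unique component $Z$ of $D_{k+1}(\sigma)$ with $\dim W \ge \dim Z - (n-k)$. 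Taking suprema yields (\ref{E:equal3}).

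For (\ref{E:equal4}) a parallel argument applies: the condition that the $\sigma$-rank drops from $k+1$ to $k$, imposed on the locus where $\pi\sigma$ already has rank $k$, is locally cut out by $m-k$ equations, reflecting the codimension-$(m-k)$ inclusion $Y_1 \subset Y_2$ at generic smooth points (where $\mu$ is smooth on $U$, so $Y_2$ is smooth and $Y_1$ is a smooth subvariety of codimension $m-k$). Applying Krull to a point of $D_k(\sigma)$ lying on a component $W$ of $D_k(\pi\sigma)$ gives $\dim (W\cap D_k(\sigma)) \ge \dim W - (m-k)$, and with $D_k(\sigma)\neq\emptyset$ and the component hypothesis one deduces (\ref{E:equal4}). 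The inequality (\ref{E:equal5}) is then obtained by chaining (\ref{E:equal3}) and (\ref{E:equal4}).

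The main obstacle is twofold: first, verifying the local $m-k$ equations cutting out $Y_1$ inside the singular determinantal variety $Y_2$; and second, the component-level bookkeeping required to convert the local, component-wise codimension bounds into the stated global dimension inequalities. It is precisely here that the hypothesis on $D_k(\pi\sigma)$ (together with the respective non-emptiness assumptions) enters, guaranteeing that the relevant components of $D_{k+1}(\sigma)$ and $D_k(\pi\sigma)$ carry the vanishing-section data that powers the Krull estimate.
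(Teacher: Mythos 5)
Your reduction to the generic matrix picture and your argument for (\ref{E:equal3}) are essentially sound, and the latter is in fact a pleasant intrinsic variant of the paper's proof: instead of pulling back $\mu^{-1}(M_k(m,n))$ along $f$, you exhibit $D_k(\pi\sigma)\setminus D_k(\sigma)$ as the zero locus of a section of the rank-$(n-k)$ bundle $H^{-1}\otimes\bigl(F'/\sigma(E)\bigr)$ and apply Krull. (One imprecision: $\sigma(E)$ is a rank-$(k+1)$ subbundle of $F'$ only on $D_{k+1}(\sigma)\setminus D_k(\sigma)$, not on all of $X\setminus D_k(\sigma)$, where the rank of $\sigma$ can exceed $k+1$; the argument should be run on $D_{k+1}(\sigma)\setminus D_k(\sigma)$, which is where it is needed anyway, using that the hypothesis also forces no component of $D_{k+1}(\sigma)$ to lie in $D_k(\sigma)$.)

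The genuine gap is in (\ref{E:equal4}), precisely at the point you yourself flag as ``the main obstacle'' and leave unresolved. Your parenthetical claim that ``$\mu$ is smooth on $U$, so $Y_2$ is smooth'' is false: a smooth morphism pulls back singular subvarieties to singular ones, so $Y_2=\mu^{-1}(M_k(m,n))\cap U$ is singular along $\mu^{-1}(M_{k-1}(m,n))\cap U$. Consequently the ``$m-k$ local equations'' cutting out $Y_1$ inside $Y_2$ are only available away from $\mu^{-1}(M_{k-1}(m,n))$, i.e., away from $D_{k-1}(\pi\sigma)$ upstairs, and your Krull estimate at a point of $D_k(\sigma)$ lying in $D_{k-1}(\pi\sigma)$ has no justification. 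The missing idea is the chain $D_{k-1}(\pi\sigma)\subset D_k(\sigma)\subset D_k(\pi\sigma)$: it shows that the standing hypothesis also rules out components of $D_k(\pi\sigma)$ contained in $D_{k-1}(\pi\sigma)$, whence $\dim(D_k(\pi\sigma))=\dim\bigl(D_k(\pi\sigma)\setminus D_{k-1}(\pi\sigma)\bigr)$, and one may work entirely over $\mu^{-1}\bigl(M_k(m,n)\setminus M_{k-1}(m,n)\bigr)\cap U$. There both $Y_2$ and $Y_1$ are smooth (a point of $Y_1$ with $AB$ of rank exactly $k$ and $B$ of full rank has $\mathrm{rank}(A)=k$, hence is a smooth point of $M_k(m,n+1)$), so $Y_1$ is a local complete intersection of codimension $m-k$ in $Y_2$, the pullback under $f$ is locally cut out by $m-k$ equations, and subadditivity of codimension applied to $f:D_k(\pi\sigma)\setminus D_{k-1}(\pi\sigma)\to \mu^{-1}(M_k(m,n))\setminus\mu^{-1}(M_{k-1}(m,n))$ gives (\ref{E:equal4}). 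Without this reduction your argument for (\ref{E:equal4}), and hence for (\ref{E:equal5}), is incomplete.
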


\begin{proof}
We prove (\ref{E:equal3}). Note that the hypothesis also implies that no irreducible component of $D_{k+1}(\sigma)$ is contained in $D_k(\sigma)$. We use the smoothness of $M_{k+1}(m,n+1)\setminus M_k(m,n+1)$ and the fact that $(M_{k+1}(m,n+1)\setminus M_k(m,n+1))\times M(n+1,n)$ intersects $\mu^{-1}(M_k(m,n))$. By the hypothesis, we know that
\[
\mathrm{dim}(D_{k+1}(\sigma))=\mathrm{dim}(D_{k+1}(\sigma)\setminus D_k(\sigma)).
\]
We apply Proposition \ref{prop:dimgen} and subbadditivity of codimension for the restriction of $f$ to $f^{-1}((M_{k+1}(m,n+1)\setminus M_k(m,n+1))\times M(n+1,n))=D_{k+1}(\sigma)\setminus D_k(\sigma)$ and the subvariety $D_k(\pi\sigma)\setminus D_k(\sigma)=f^{-1}\left(\mu^{-1}(M_k(m,n))\setminus (M_k(m,n+1)\times M(n+1,n))\right)$.

We prove (\ref{E:equal4}). First note that, since $D_{k-1}(\pi\sigma)\subset D_k(\sigma)\subset D_k(\pi\sigma)$, the hypothesis implies that no irreducible component of $D_k(\pi\sigma)$ is contained in  $D_{k-1}(\pi\sigma)$ and hence
\[
\mathrm{dim}(D_k(\pi\sigma))=\mathrm{dim}\left(D_k(\pi\sigma)\setminus D_{k-1}(\pi\sigma)\right).
\]

Since $M_k(m,n)\setminus M_{k-1}(m,n)$ is smooth, and $\mu$ is smooth on $(M(m,n+1)\times M(n+1,n))\setminus(M(m,n+1)\times M_{n-1}(n+1,n))$ (from the proof of Proposition \ref{prop:dimgen}), it follows that any $(A,B)\in \mu^{-1}(M_k(m,n)\setminus M_{k-1}(m,n))\setminus\left(M(m,n+1)\times M_{n-1}(n+1,n)\right)$ is a smooth point of $\mu^{-1}(M_k(m,n))$. Taking into account that the image of $f$ is in the complement of $M(m,n+1)\times M_{n-1}(n+1,n)$, (\ref{E:equal4}) follows from the subadditivity of codimension for the restricted map:
\[
f:D_k(\pi\sigma)\setminus D_{k-1}(\pi\sigma)\to \mu^{-1}(M_k(m,n))\setminus \mu^{-1}(M_{k-1}(m,n)).
\] 
%For (\ref{E:equal4}) we work with the morphism
%\[
%\widetilde{\mu\circ f}:Y\to \widetilde{M}_k(m,n)
%\] 
%for $Y$ an irreducible component of $\widetilde{D}_k(\pi\sigma)$. PENDING

The inequality (\ref{E:equal5}) follows from (\ref{E:equal3}) and (\ref{E:equal4}) by addition.

Note that if $D_k(\sigma)$ is non empty, then $D_k(\pi\sigma)$ is also non empty, and the non emptiness of $D_k(\pi\sigma)$ implies the non emptiness of  $D_{k+1}(\sigma)$.
%
%
%
%
%Likewise, if $D_k(\pi\sigma)$ is non empty (implying that $D_{k+1}(\sigma)$ is non empty), then
%\begin{equation}\label{E:equal3}
%\dim[D_k(\pi\sigma)]\ge \dim[D_{k+1}(\sigma)]  - (n-k).
%\end{equation}
%
%Equations (\ref{E:equal4}) and (\ref{E:equal3}) imply that
%\begin{equation}\label{E:equal5}
%\dim[D_k(\sigma)] \ge \dim[D_{k+1}(\sigma)]  - (m+n-2k)
%\end{equation}
%if $D_k(\sigma)$ is non empty.
\end{proof}

\begin{rmk}
Under some positivity assumptions, for example $E^*\otimes H$ be ample as in \cite{Fulton-Harris-Lazarsfeld}, the non emptiness of the corresponding degeneracy loci follows. In fact, \cite[Lemma 4]{Fulton-Harris-Lazarsfeld} reduces, by taking hyperplane sections, to proving the non emptiness of $D_k(\sigma)$ for the case $\mathrm{dim}(D_k(\pi\sigma)) = m-k$. Note that the proof of \cite[Lemma 4]{Fulton-Harris-Lazarsfeld} cannot be adapted to our case.
\end{rmk}

\begin{rmk}
The generic situation from section \ref{sec:generic} corresponds to the case $X=M(m,n+1) \times M(n+1,n)$ and $f=\mathrm{id}$ with trivial bundles $E$, $F$, $F'$ and naturally defined $\sigma$ and $\pi$.
\end{rmk}

\section{Secant loci}

Let $C$ be a smooth projective curve of genus $g$ and $n \ge 1$ be an integer. Denote by $\Xi_n \subset C\times C_n$ the universal divisor on the $n$--th symmetric product  $C_n$  of $C$. Consider the two projections $\pi:C\times C_n\to C$, respectively $\pi_n:C\times C_n\to C_n$. For any globally generated line bundle $L$ of degree $d$ on $C$ with $h^0(L)=r+1$, the \emph{secant bundle} of $L$ is the rank--$n$ vector bundle on  $C_n$ defined by: 
 \[
 E_{L,n} := \pi_{n*}(\pi^*L\otimes\O_{\Xi_n}).
 \]
 For any $\xi\in C_n$, the fibre of $E_{L,n}$ over $\xi$ is isomorphic to $L|_\xi$.
Note that $\pi_{n*}\pi^*L \cong H^0(L)\otimes\O_{C_n}$ and hence  we have a sheaf morphism
\[
e_{L,n}:H^0(L)\otimes\O_{C_n}\to E_{L,n}.
\]
The morphism $e_{L,n}$ is generically surjective for $n\le r$ since for a general effective divisor $\xi$ of degree $n$ on $C$ the map $H^0(L)\to L|_\xi$ is surjective.

For any $k\le n-1$, the \emph{secant locus} $V^k_n(L)$ is the closed subscheme $V^k_n(L) :=D_k(e_{L,n})\subset C_n$ \cite{Coppens-Martens}. If $L$ is very ample, it parametrizes the $n$--secant $(k-1)$--planes in the induced embedding. 
The secant loci have been recently used in connection with syzygy problems, \cite{Aprodu-Sernesi}, \cite{Farkas-Kemeni}.

The expected dimension of $V^k_n(L)$ is $n-(r+1-k)(n-k)$ and hence, if non--empty, then $V^k_n(L)$ has dimension $\ge n-(r+1-k)(n-k)$.

Consider $p\in C$ a general point that defines an embedding $C_n\cong p+C_n\subset C_{n+1}$. Since its pullback to the cartesian product $C^{n+1}$ is 
\[
(\{p\}\times C\times\ldots\times C)+(C\times \{p\}\times C\times\ldots\times C)+\ldots+(C\times C\times\ldots\times C\times \{p\}),
\] 
it follows that $C_n$ is moreover an ample divisor (see also \cite[Lemma 2.7]{Fulton-Lazarsfeld} for another proof). 

For any $n$, we have a short exact sequence of vector bundles on $C_n$:
\[
0\to \mathcal O_{C_{n+1}}(-C_n)|_{C_n}\to E_{L,n+1}|_{C_n}\to E_{L,n}\to 0.
\]
Indeed, the kernel of the surjective morphism $E_{L,n+1}|_{C_n}\to E_{L,n}$ is a line bundle on $C_n$, and hence it is isomorphic to $\mathrm{det}(E_{L,n+1}|_{C_n})\otimes \mathrm{det}(E_{L,n})^{-1}$. Using the isomorphisms $\mathrm{det}(E_{L(p),n+1})\cong \mathrm{det}(E_{L,n+1})\otimes\mathcal O_{C_{n+1}}(C_n)$ and $\mathrm{det}(E_{L(p),n+1})|_{C_n}\cong \mathrm{det}(E_{L,n})$ (see, for example \cite[5.2.3, p. 71]{Aprodu-Nagel}) the claim follows.

\medskip

We apply the result from the previous section to $X=C_n$, $H=\mathcal O_{C_{n+1}}(-C_n)|_{C_n}$, $F'=E_{L,n+1}|_{C_n}$, $F=E_{L,n}$, and $E=H^0(L)\otimes \mathcal O_{C_n}$, where $\sigma$ is the evaluation map. Note that $D_k(\pi\sigma)=V_n^k(L)$, $D_k(\sigma)=V_{n+1}^k(L)\cap C_n$, $D_{k+1}(\sigma)=V_{n+1}^{k+1}(L)\cap C_n$ and hence $\dim(D_k(\sigma))=\dim(V_{n+1}^k(L))-1$ and $\dim(D_{k+1}(\sigma))=\dim(V_{n+1}^{k+1}(L))-1$ by the genericity of $p$. 
%It is clear that \cite[Lemma 4]{Fulton-Harris-Lazarsfeld} does not apply. However, 
Assuming non emptiness for the suitable secant loci, the inequalities (\ref{E:equal3}), (\ref{E:equal4}) and (\ref{E:equal5}) yield to the following excess dimension result:

\begin{thm}
\label{thm:main}
If $V_n^k(L)\ne\emptyset$ then
\begin{equation}\label{E:equal6}
\dim\left(V_n^k(L)\right)\ge \dim\left(V_{n+1}^{k+1}(L)\right) - (n-k+1).
\end{equation}
If $V_{n+1}^k(L)\ne\emptyset$ and moreover $\mathrm{dim}(V_{n+1}^k(L))\ge 1$ then
\begin{equation}\label{E:equal7}
\dim\left(V_{n+1}^k(L)\right)\ge \dim\left(V_n^k(L)\right) - (r-k)
\end{equation}
and
\begin{equation}\label{E:equal8}
\dim\left(V_{n+1}^k(L)\right)\ge \dim\left(V_{n+1}^{k+1}(L)\right) - (r+n-2k+1).
\end{equation}
\end{thm}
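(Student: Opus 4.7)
The plan is to derive the three inequalities as a direct application of Proposition \ref{prop:dim} to the diagram set up in the paragraph immediately preceding the theorem statement, i.e.\ with $X = C_n$, $E = H^0(L) \otimes \mathcal{O}_{C_n}$, $F = E_{L,n}$, $F' = E_{L,n+1}|_{C_n}$, $H = \mathcal{O}_{C_{n+1}}(-C_n)|_{C_n}$, and $\sigma$ the evaluation map. Here $\rk(E) = r+1$ plays the role of $m$ in Proposition \ref{prop:dim}, the rank of $F$ is $n$, and the embedding $C_n \hookrightarrow C_{n+1}$ is $\xi \mapsto \xi + p$ for a generic $p \in C$. Under this identification, one has $D_k(\pi\sigma) = V_n^k(L)$, $D_k(\sigma) = V_{n+1}^k(L) \cap C_n$, and $D_{k+1}(\sigma) = V_{n+1}^{k+1}(L) \cap C_n$.

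Before quoting Proposition \ref{prop:dim} I need to verify two things. First, the dimension shift $\dim(V_{n+1}^j(L) \cap C_n) = \dim V_{n+1}^j(L) - 1$ for $j = k, k+1$. This follows from the ampleness of $C_n$ as a divisor in $C_{n+1}$, established in the discussion preceding the theorem: intersection with an ample divisor drops dimension by exactly one on any positive-dimensional closed subvariety it meets. This is why the hypothesis $\dim V_{n+1}^k(L) \geq 1$ appears for \eqref{E:equal7} and \eqref{E:equal8}: it is needed to guarantee that $D_k(\sigma) \neq \emptyset$ so that Proposition \ref{prop:dim} applies. Second, I must check the hypothesis that no irreducible component of $D_k(\pi\sigma) = V_n^k(L)$ is contained in $D_k(\sigma) = V_{n+1}^k(L) \cap C_n$. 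This is the step I expect to be the main subtlety: for $\xi \in V_n^k(L)$, the condition $\xi + p \in V_{n+1}^k(L)$ translates to $h^0(L(-\xi - p)) \geq r+1-k$, which, for fixed $\xi$, cuts out a proper closed subset of $C$ (as $L(-\xi)$ has nonzero sections and the evaluation at a general point does not vanish identically on any component). Taking $p$ outside the union of these loci, as $\xi$ ranges over a dense set in each component of $V_n^k(L)$, ensures that no component of $V_n^k(L)$ sits inside $V_{n+1}^k(L) \cap C_n$.

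Once both checks are in place, the three inequalities follow by substitution into \eqref{E:equal3}, \eqref{E:equal4}, and \eqref{E:equal5}. Indeed, \eqref{E:equal3} gives $\dim V_n^k(L) \geq (\dim V_{n+1}^{k+1}(L) - 1) - (n - k)$, which is \eqref{E:equal6}; \eqref{E:equal4} with $m = r+1$ gives $\dim V_{n+1}^k(L) - 1 \geq \dim V_n^k(L) - (r+1-k)$, which is \eqref{E:equal7}; and \eqref{E:equal5} gives $\dim V_{n+1}^k(L) - 1 \geq (\dim V_{n+1}^{k+1}(L) - 1) - (r + 1 + n - 2k)$, which is \eqref{E:equal8}. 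The non-emptiness hypotheses stated in the theorem correspond exactly to those required in Proposition \ref{prop:dim} for each of the three inequalities, so no further work is needed.
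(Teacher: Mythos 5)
Your overall strategy is exactly the paper's: apply Proposition \ref{prop:dim} to the diagram set up just before the theorem, use the ampleness of $C_n\subset C_{n+1}$ and the genericity of $p$ to get $\dim(V_{n+1}^j(L)\cap C_n)=\dim(V_{n+1}^j(L))-1$, and reduce everything to checking that no irreducible component of $D_k(\pi\sigma)=V_n^k(L)$ is contained in $D_k(\sigma)=V_{n+1}^k(L)\cap C_n$. Your arithmetic translation of (\ref{E:equal3})--(\ref{E:equal5}) into (\ref{E:equal6})--(\ref{E:equal8}) with $m=r+1$ is correct, as is your reading of why the hypothesis $\dim(V_{n+1}^k(L))\ge 1$ is needed.

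The problem is in your justification of the containment hypothesis. You assert that for a fixed $\xi\in V_n^k(L)$ the condition $h^0(L(-\xi-p))\ge r+1-k$ cuts out a proper closed subset of $C$. This is false whenever $h^0(L(-\xi))\ge r+2-k$, i.e.\ whenever $\xi\in V_n^{k-1}(L)$: since $h^0(L(-\xi-p))\ge h^0(L(-\xi))-1$, in that case the ``bad'' set of $p$ is all of $C$. Your parenthetical (nonvanishing of a general section of $L(-\xi)$) only controls the base locus of $|L(-\xi)|$, which is the relevant obstruction precisely when $h^0(L(-\xi))=r+1-k$ exactly; it does not exclude a component of $V_n^k(L)$ lying entirely inside $V_n^{k-1}(L)$, in which case no choice of $p$ makes the hypothesis of Proposition \ref{prop:dim} hold. (The paper only addresses this degeneration in a later remark, under the extra hypotheses that $L$ is very ample and $k\le r$.) A second, smaller, slip: you remove from $C$ a union of proper closed subsets indexed by a \emph{dense} set of divisors $\xi$, and such a union can be all of $C$; since $V_n^k(L)$ has finitely many components, one should fix a single suitable $\xi$ in each component and remove only finitely many closed sets. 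For comparison, the paper's own argument is organized differently: for each component $V$ it forms the locus $D_V\subset C$ of points common to the supports of all divisors in $V$ and takes $p$ outside the union of the $D_V$; it is likewise terse about the case $V\subset V_n^{k-1}(L)$, but it does avoid the dense-union issue.
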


%\begin{proof}
%We only need to verify that no irreducible component of $V^k_{n+1}(L)\cap C_n$ is contained in $V^{k-1}_{n+1}(L)\cap C_n$. By the genericity of $p$ it amounts to show that no irreducible component of $V^k_{n+1}(L)$ is contained in $V^{k-1}_{n+1}(L)$. This is proved as in \cite[Lemma 2.2]{Aprodu-Sernesi}. Choose $D=x_1+\cdots+x_{n+1}$ a general element in an irreducible component of $V^k_{n+1}(L)$ and assume that $D\in V^{k-1}_{n+1}(L)$. Suppose for simplicity that $L$ is very ample. Then $D$ spans a projective space of dimension $\le (k-2)$ and we may assume that this space is generated by $x_1,\ldots,x_n$. For a general point $x$ on the curve, $x_1+\cdots+x_n+x$ spans a plane of dimension $\le (k-1)$ and hence it belongs to $V^k_{n+1}(L)$. Note that $D$ and $x_1+\cdots x_n+x$ belong to the same component of $V^k_{n+1}(L)$, and, by genericity of $x$, $\mathrm{dim}\langle x_1+\cdots+x_n+x\rangle=\mathrm{dim}\langle D\rangle+1$, contradicting the genericity of $D$.
%\end{proof}

\begin{proof}
We only need to verify that no irreducible component of $V^k_n(L)$ is contained in $V^k_{n+1}(L)\cap C_n$. To this end, we use the genericity of $p$. Note that all the divisors in $V^k_{n+1}(L)\cap C_n$ contain $p$ in the support.  Let $V\subset V^k_{n}(L)$ be an irreducible component and $D_V\subset C$ be the intersection of the supports of divisors $D$ in $V$. If we pick $p$ outside $D_V$, then it is clear that $V\not\subset V^k_{n+1}(L)\cap C_n$. Choosing $p$ in the complement of the union of these loci $D_V$, we obtain the result.
\end{proof}

\begin{rmk}
Since the expected dimension of $V_n^k(L)$ is $n-(r+1-k)(n-k)$ and the expected dimension of $V_{n+1}^{k+1}(L)$ is $(n+1)-(r-k)(n-k)$ we note that if the dimension of $V_n^k(L)$ equals the expected dimension, then the same is true for $V_{n+1}^{k+1}(L)$.
\end{rmk}

\begin{rmk}
In the special case $L=K_C$, we have an identification $V_n^k(K_C)=C_n^{n-k}$ and hence
\[
\dim\left(V_n^k(K_C)\right)=\dim\left(W_n^{n-k}(C)\right)+(n-k).
\]
Theorem 1 in \cite{Fulton-Harris-Lazarsfeld} corresponds to (\ref{E:equal6}). Corollary 2 in \cite{Fulton-Harris-Lazarsfeld} corresponds to (\ref{E:equal7}) and Corollary 3 in \cite{Fulton-Harris-Lazarsfeld} is~(\ref{E:equal8}). Note, however, that in \cite{Fulton-Harris-Lazarsfeld} no non-emptiness assumption is needed.
\end{rmk}

\begin{rmk}
\label{rmk:Jacobian}
If $h^1(L)=h$, the image of $V^k_n(L)$ in the Jacobian via the Abel-Jacobi map is the intersection $W_n(C)\cap\left((L-K_C)+W^{n-k-1+h}_{2g-2-d+n}(C)\right)$.
The proof follows from Riemann-Roch applied to $L$ and $L(-D)$ with $D\in V_n^k(L)$. 

In particular, $V_n^k(L)\ne \emptyset$ if and only if (compare with \cite{Farkas-Kemeni})
\[
L-K_C\in W_n(C)-W^{n-k-1+h}_{2g-2-d+n}(C)\subset \mathrm{Pic}_{d-2g+2}(C).
\] 
If $L$ is non special, then $V_n^k(L)\ne \emptyset$ if and only if $L-K_C\in W_n(C)-W^{n-k-1}_{2g-2-d+n}(C)\subset \mathrm{Pic}_{d-2g+2}(C)$. Note that, having fixed $d$, $n$ and $k$, the locus $W_n(C)-W^{n-k-1+h}_{2g-2-d+n}(C)$ decreases when $h$ increases, and hence the chances for $V_n^k(L)$ to be non empty also decrease.

In some cases, for instance, if $n\ge g$ or if $L-K_C\ge 0$, we have an inclusion $(L-K_C)+W^{n-k-1+h}_{2g-2-d+n}(C)\subset W_n(C)$, however,  this condition is not verified in general. In principle, by restriction to $W_n(C)$, the description from \cite{Fulton-Harris-Lazarsfeld} applies to obtain excess dimension results for the images of secant loci in $\mathrm{Pic}_n(C)$. If the fibres of the Abel-Jacobi maps over these loci are controllable (for example, if $n\le \mathrm{gon}(C)-2$), then one can pass from the Jacobian to the symmetric products and, assuming moreover that $h\le 1$, Theorem \ref{thm:main} (\ref{E:equal6}) can be improved in the sense that one can drop the non-emptiness assumption in the hypothesis. 
\end{rmk}

\begin{rmk}
Applying Theorem \ref{thm:main} we can simplify the statement of Proposition 2.6 in \cite{Aprodu-Sernesi} on condition ($\Delta_q$) and obtain a perfect analogue of the canonical case (\cite[Proposition 3.6]{Aprodu-Sernesi}). More precisely, using the terminology and the notation of loc.cit., if the dimension of the locus $V^{r-q}_{r-q+1}(L)$ equals the expected dimension $r-2q$ and $\mathrm{dim}(V^{r-q+1}_{r-q+3}(L))\le r-2q+1$ then the condition ($\Delta_q$) holds in the strong sense.
\end{rmk}

\begin{rmk}
A similar argument as in the proof of \cite[Lemma 2.2]{Aprodu-Sernesi} shows that no irreducible component of $V^k_n(L)$ is contained in $V^{k-1}_n(L)$ if $L$ is very ample and~$k\le r$.
\end{rmk}

\begin{rmk}
In \cite{Kemeny}, M. Kemeny applies our result to syzygies of curves.
\end{rmk}


\begin{thebibliography}{99999}
\bibitem{Aprodu-Nagel}
M. Aprodu, J. Nagel. 
Koszul Cohomology and Algebraic Geometry. 
\emph{University Lect. Series} 52, AMS, 2010.

\bibitem{Aprodu-Sernesi} 
M. Aprodu, E. Sernesi.  
Secant spaces and syzygies of special line bundles on curves. 
\emph{Alg. \& Number Theory} 9:3 (2015) 585--600.

\bibitem{Arbarello-Cornalba-Griffiths-Harris} 
E. Arbarello, M. Cornalba, P. Griffiths and J. Harris. 
Geometry of algebraic curves. vol. I. 
Springer--Verlag, 1984.

%\bibitem{BV88}
% W. Bruns, U. Vetter: Determinantal Rings,   Springer LNM vol. 1327 (1988).

\bibitem{Coppens-Martens}
M. Coppens, G. Martens. 
Secant spaces and Clifford's theorem.
\emph{Compositio Math.}, 78:2 (1991) 193--212.

\bibitem{Farkas-Kemeni} 
G. Farkas, M. Kemeny. 
The generic Green-Lazarsfeld secant conjecture. 
\emph{Inventiones Math.}, 203:1 (2016) 265--301.

\bibitem{Fulton-Harris-Lazarsfeld} 
W. Fulton, J. Harris, R. Lazarsfeld. 
Excess linear series on an algebraic curve. 
\emph{Proc. American Math. Soc.} 92:3 (1984) 320--322.

\bibitem{Fulton-Lazarsfeld}
W. Fulton, R. Lazarsfeld.
On the connectedness of degeneracy loci and special divisors. 
\emph{Acta Math.} 146:1 (1981) 271--283.

\bibitem{Harris} 
J. Harris. 
Algebraic Geometry. A first Course. 
Springer--Verlag, 1992.

\bibitem{Kemeny}
M. Kemeny. 
The extremal secant conjecture for curve of arbitrary gonality. 
preprint arXiv:1512:00212.

\bibitem{Kempf} 
G. Kempf. 
Images of homogeneous vector bundles and varieties of complexes. 
\emph{Bull. American Math. Soc.} 81:5 (1975) 900--901.

%\bibitem{dL75}
%D.  Laksov. Deformation of determinantal schemes. \emph{Compositio Math.} 30 (1975), 273-292.
\end{thebibliography}
\end{document}